\theoremstyle{plain}
\newtheorem{thm}{Theorem}[section]
\newtheorem{cor}[thm]{Corollary}
\newtheorem{prop}[thm]{Proposition}
\newtheorem{prob}[thm]{Problem}
\theoremstyle{definition}
\theoremstyle{remark}
\newcommand{\lemref}[1]{\hyperref[#1]{Lemma \ref*{#1}}}
\newcommand{\thmref}[1]{\hyperref[#1]{Theorem \ref*{#1}}}
\newcommand{\propref}[1]{\hyperref[#1]{Proposition \ref*{#1}}}
\newcommand{\corref}[1]{\hyperref[#1]{Corollary \ref*{#1}}}
\newcommand{\defref}[1]{\hyperref[#1]{Definition \ref*{#1}}}
\newcommand{\remref}[1]{\hyperref[#1]{Remark \ref*{#1}}}
\newcommand{\conjref}[1]{\hyperref[#1]{Conjecture \ref*{#1}}}
\newcommand{\probref}[1]{\hyperref[#1]{Problem \ref*{#1}}}
\newcommand*{\defeq}{\mathrel{\rlap{%
                     \raisebox{0.27ex}{$\m@th\cdot$}}%
                     \raisebox{-0.27ex}{$\m@th\cdot$}}%
                     =}
\numberwithin{equation}{section}
\def\@setcopyright{}
\def\serieslogo@{}
\begin{document}

\title{Squarefree polynomials with prescribed coefficients}

\author{Amotz Oppenheim}
\address{Raymond and Beverly Sackler School of Mathematical Sciences, Tel-Aviv University, Tel-Aviv, Israel}
\email{amotzo@mail.tau.ac.il}

\author{Mark Shusterman}
\address{Raymond and Beverly Sackler School of Mathematical Sciences, Tel-Aviv University, Tel-Aviv, Israel}
\email{markshus@mail.tau.ac.il} 
\date{}
\begin{abstract}
For nonempty subsets $S_0, \dots, S_{n-1}$ of a (large enough) finite field $\mathbb{F}$ satisfying  
$$|S_1|, \dots, |S_{n-1}| > 2 \quad \mathrm{or} \quad |S_1|,|S_{n-1}| > n - 1,$$
we show that there exist $a_0 \in S_0, \dots, a_{n-1} \in S_{n-1}$ such that
$$ T^n + a_{n-1}T^{n-1} + \dots + a_1T + a_0 \in \mathbb{F}[T] $$
is a squarefree polynomial.

\end{abstract}

\maketitle

\section{Introduction}

Consider the positive $(n+1)$-digit integers 
\begin{equation}
m = a_0 + a_1q + \dots + a_{n-1}q^{n-1} + a_nq^n
\end{equation}
in a certain fixed base $q$.
A very interesting (and very broad) problem in analytic number theory is to study the behavior of an arithmetic function over the integers $m$ whose vector of digits lies in some `box'
\begin{equation}
B \defeq 
\big\{(a_0, \dots, a_n) \in S_0 \times \dots \times S_n \ | \ S_i \subseteq \{0,1, \dots, q-1\}, \ 0 \leq i \leq n \big\}.
\end{equation}
Boxes include (unions of) arithmetic progressions, short intervals, base-$5$ integers not having $3$ as a digit, and many other examples.
This generalized point of view has its origins in the works \cite{Sierp, Sierp2} of Sierpinski who studied primes with prescribed values of $a_0$ and $a_n$.
Many works followed, for instance \cite{Kon} by Konyagin, \cite{Tao} by Tao, and also \cite{BoDi, Bugea, BK, CM, CM2, EMS2, GShpar, Harm, HK, Kon, KMS, Meng, Shpar, Wol}.
Furthermore, Bourgain has shown in \cite{Bourg} (improving on several previous results) that one can find a prime with a fixed proportion of the digits prescribed, and Maynard in \cite{May} has shown that there are primes in every large cube, that is, in a box where for all $0 \leq i \leq n$, $S_i = S$ is a set with a small (compared to $q$) complement.

As can be seen from the works mentioned above, von Mangoldt is not the only function studied in this context.
For example, Erdos, Mauduit, and Sarkozy posed the following problem in \cite{EMS}.
\begin{prob} \label{ErdProb}

Is there (for infinitely many values of $n$) a squarefree integer in the box given by $S_i \defeq \{0,1\}$ for all $0 \leq i \leq n -1$ and $S_n = \{1\}$?

\end{prob}
The problem has been solved affirmatively for small values of $q$ (and variants of it were considered) in \cite{EMS}, 
in \cite{FK} by Filaseta and Konyagin,
and in \cite{BShpar} by Banks and Shparlinski.
Even if each $S_i$ is allowed to contain some fixed amount $k > 2$ of elements, the problem remains open.
In a slightly different vein, it is shown in the recent work \cite{DES} of Dietmann, Elsholtz, and Shparlinski that one can find a squarefree integer with almost $40\%$ of the (base-$2$) digits prescribed. 

The function field analog of our setup,
where the digits are replaced by the coefficients of a polynomial
\begin{equation}
m(T) = a_0 + a_1T + \dots + a_{n-1}T^{n-1} + a_nT^n
\end{equation} 
over a finite field $\mathbb{F}_q$, has also received a lot of attention, as can be seen from \cite{AGGMY, BBF, BBR, CE, CMRSS, CMS, Co, FY, Gar, HKR, Ha, Hay, HM, Hsu, KR1, KR2, KRRR, MR, Mos, PT, Pol, Ree, RMKR, Rodit, Rodg0, Rodg, Shpar0, TW, vO, Wan, YM}.
In this work we contribute to the study of the function field analog by giving lower bounds on the number of squarefrees in `sparse' boxes.
By saying that a box $S_0 \times \dots \times S_n$ is sparse, 
we roughly mean that each $S_i$ (for $0 \leq i \leq n$) is very small compared to $q$. 
Our first result solves a function field analog of \probref{ErdProb}.
\begin{thm} \label{FirstRes}
Let $n > 1$ be an integer, and let $S_0, \dots, S_{n}$ be subsets of a finite field $\mathbb{F}$ with
\begin{equation} \label{oddCond}
|S_0|, |S_n| = 1, \quad S_n \neq \{0\}, \quad |S_1|, \dots, |S_{n-1}| = 3.
\end{equation}
Then there exist $a_0 \in S_0, \dots, a_{n} \in S_{n}$ such that
\begin{equation}
m(T) = a_0 + a_1T + \dots + a_{n-1}T^{n-1} + a_nT^n
\end{equation}
is a squarefree polynomial over $\mathbb{F}$.
Moreover, if the characteristic of $\mathbb{F}$ is $2$, we can replace the last condition in \eqref{oddCond} by the weaker assumption that
\begin{equation}
|S_1|, \dots, |S_{n-1}| = 2.
\end{equation}
In particular, for any distinct $0 \neq a, b \in \mathbb{F}$ we can take $S_i \defeq \{a,b\}$ for all $0 \leq i \leq n-1$ and $S_n = \{a\}$ as in \probref{ErdProb}.
\end{thm}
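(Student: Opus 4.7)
The plan is to reduce the theorem to the general monic statement announced in the abstract, and to treat characteristic $2$ separately using the fact that in that setting the derivative is automatically a perfect square.

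First, I would normalize the leading coefficient. Since $|S_n|=1$ and $S_n\ne\{0\}$, fix the unique $a_n \in S_n$; it is nonzero. Dividing $m(T)$ by $a_n$ preserves squarefreeness and yields a monic polynomial whose $i$-th coefficient lies in $a_n^{-1}S_i$, a set of the same cardinality as $S_i$. In characteristic different from $2$, the sets $a_n^{-1}S_i$ for $1 \le i \le n-1$ all satisfy $|a_n^{-1}S_i| = 3 > 2$, so the general monic result from the abstract directly yields a choice of coefficients making the polynomial squarefree.

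The underlying monic result would itself be proved by bounding the number of non-squarefree polynomials in the box. A non-squarefree monic $f$ factors as $p^2 h$ for some monic irreducible $p$ of degree $1 \le d \le n/2$, so for each such $p$ one would count how many tuples $(a_1,\dots,a_{n-1})$ in the prescribed triples arise as middle coefficients of $p^2 h$ (with $a_0$ and the leading coefficient forced), and then sum over $p$ to verify that the total is strictly less than $3^{n-1}$. This is the central obstacle: since the box has size only $3^{n-1}$, far smaller than the $|\mathbb{F}|^{n-1}$ ambient count, generic tools such as Schwartz--Zippel applied to the discriminant (whose degree in each coefficient variable can exceed $3$) are too crude, and a finer estimate sensitive to the sparse box structure is required.

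For the characteristic $2$ case the abstract's condition $|S_i| > 2$ fails when $|S_i| = 2$, so a separate argument is needed. Here the plan is to exploit that $f' = \sum_{i \text{ odd}} a_i T^{i-1}$ is automatically a square in $\mathbb{F}[T]$: by invertibility of Frobenius on the finite field $\mathbb{F}$, write $f' = g^2$ with $g \defeq \sum_{i \text{ odd}} \sqrt{a_i}\, T^{(i-1)/2}$. Then $f$ is squarefree iff $\gcd(f,g)=1$, and since $\deg g \le (n-1)/2$ is roughly half of $\deg f$, the counting of $(a_i)$ for which $f$ and $g$ share a common irreducible factor is tighter by a factor of about $2$, providing just enough slack for sets of size $2$ to suffice. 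The hardest point in both regimes is the same: turning the sparse box structure into a quantitative saving over generic counts of non-squarefree polynomials.
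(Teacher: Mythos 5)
Your proposal does not contain a proof: it reduces the theorem to the general statement in the abstract and then, for that statement, only names the difficulty (``a finer estimate sensitive to the sparse box structure is required'') without supplying it. The counting route you sketch --- writing a non-squarefree $f$ as $p^2h$ and summing over irreducible $p$ --- is exactly the kind of sieve the paper points out is insufficient here: the box has only $3^{n-1}$ elements inside an ambient space of size $|\mathbb{F}|^{n-1}$, and there is no evident way to count how many polynomials of the form $p^2h$ have all their middle coefficients landing in prescribed $3$-element sets. The polynomials $p^2h$ do not equidistribute over such exponentially sparse boxes in any way accessible to standard character-sum or sieve estimates, and you give no substitute. The same gap recurs in your characteristic-$2$ argument: the observation that $f'$ is a square and that $\gcd(f,g)=1$ characterizes squarefreeness is correct, but the claim that the resulting count is ``tighter by a factor of about $2$, providing just enough slack'' is asserted, not proved, and again requires counting coincidences inside a sparse box.

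The idea that is missing is the paper's use of the Combinatorial Nullstellensatz. The paper regards the discriminant $f(A_0,\dots,A_n)=\mathrm{Disc}_T(A_0+A_1T+\dots+A_nT^n)$ as a homogeneous polynomial of degree $2n-2$ in the coefficients, proves by induction on $n$ (via resultant identities) that the monomial $A_1^2A_2^2\cdots A_{n-1}^2$ appears in it with coefficient $\pm 1$, and then invokes the Nullstellensatz with sets of sizes exceeding the exponents $(0,2,\dots,2,0)$ to find a nonvanishing point of the discriminant, i.e.\ a squarefree polynomial. This completely bypasses any counting of non-squarefree polynomials. For characteristic $2$ the paper notes that the discriminant is the square of a polynomial $g$ in which $A_1\cdots A_{n-1}$ appears, so sets of size $2$ suffice --- again an algebraic, not a counting, argument. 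Without the Nullstellensatz step (or some equally strong replacement), your outline cannot be completed.
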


A variety of methods and techniques have been employed in the study of problems similar to \probref{ErdProb} (both over the integers and over rings of polynomials with coefficients from a finite field). Some of these are: circle method (and other `Fourier techniques'), Riemann Hypothesis (and zero-free regions), exponential and character sums, geometry of numbers, sieves, trace formulas, Chebotarev's density theorems, and equidistribution results in the large finite field limit. 
As the boxes considered in \thmref{FirstRes} are quite sparse, (especially when $|\mathbb{F}|$ is large) it seems that the aforementioned techniques are not sufficient here.
Our proof of \thmref{FirstRes} relies on the Combinatorial Nullstellensatz which has not yet been applied in function field arithmetic.

Using the Combinatorial Nullstellensatz we also obtain a variant of \thmref{FirstRes}
that guarantees the existence of squarefrees in even sparser boxes (when the cardinality of the field is large compared to the degree).

\begin{thm} \label{SecRes}
Let $\mathbb{F}$ be a finite field of characteristic $p$,
let $n > 2$ be an integer not congruent to $2$ mod $p$, 
and let $S_0, \dots, S_{n}$ be subsets of $\mathbb{F}$ with
\begin{equation} \label{CharNo2Eq}
|S_1|, |S_{n-1}| = n, \quad \forall i \notin\{1,n-1\} \ |S_i| = 1, \quad S_n \neq \{0\}.
\end{equation}
Then there exist $a_0 \in S_0, \dots, a_{n} \in S_{n}$ such that
\begin{equation}
m(T) = \sum_{i=0}^n a_iT^i
\end{equation}
is a squarefree polynomial over $\mathbb{F}$.
\end{thm}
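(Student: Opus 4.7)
The plan is to apply the Combinatorial Nullstellensatz to the discriminant of $m(T)$, treated as a polynomial $P(x,y) \in \mathbb{F}[x,y]$ in the two free coefficients $x \defeq a_1$ and $y \defeq a_{n-1}$, with the remaining $a_i$ set to the unique element of $S_i$. Since $|S_n|=1$ and $S_n \neq \{0\}$, the leading coefficient $a_n$ is nonzero, so $m$ has degree exactly $n$; and since $\mathbb{F}$ is a finite (hence perfect) field, $m$ is squarefree if and only if $\mathrm{disc}(m) = P(a_1,a_{n-1}) \neq 0$. Thus it suffices to exhibit $(a_1,a_{n-1}) \in S_1 \times S_{n-1}$ on which $P$ does not vanish.

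The universal discriminant $\Delta \in \mathbb{Z}[a_0,\dots,a_n]$ is homogeneous of total degree $2n-2$, and weighted-homogeneous of weighted degree $n(n-1)$ when $a_i$ is given weight $i$. These two constraints force any monomial $a_1^i a_{n-1}^j$ (with all other variables absent) appearing in $\Delta$ to satisfy $i+j = 2n-2$ and $i+(n-1)j = n(n-1)$, whose unique solution (using $n>2$) is $i=j=n-1$. Substituting constants for the fixed variables cannot raise degrees, so $\deg P \le 2n-2$; the same bookkeeping shows that the coefficient of $x^{n-1}y^{n-1}$ in $P$ coincides with the coefficient of $a_1^{n-1}a_{n-1}^{n-1}$ in $\Delta$, independently of the prescribed values of $a_0, a_2, \dots, a_{n-2}, a_n$.

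To compute this coefficient, I would specialize $\Delta$ further by setting $a_0 = a_2 = \cdots = a_{n-2} = 0$ and $a_n = 1$, obtaining $m(T) = T\cdot g(T)$ with $g(T) \defeq T^{n-1} + a_{n-1}T^{n-2} + a_1$. Using the multiplicativity $\mathrm{disc}(Tg) = a_1^2 \, \mathrm{disc}(g)$ (since $\mathrm{Res}(T,g) = g(0) = a_1$) and applying the formula $\mathrm{Res}(g,g') = \mathrm{lc}(g')^{\deg g} \prod_{g'(\beta)=0} g(\beta)$ to $g'(T) = T^{n-3}\bigl((n-1)T + (n-2)a_{n-1}\bigr)$ over $\mathbb{Q}$, the coefficient of $a_1^{n-1}a_{n-1}^{n-1}$ works out to $\pm(n-2)^{n-2}$ as a universal identity in $\mathbb{Z}$. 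This integer is nonzero modulo $p$ precisely when $n \not\equiv 2 \pmod p$, which is exactly the hypothesis of the theorem.

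With $\deg P \le 2n-2$, the coefficient of $x^{n-1}y^{n-1}$ nonzero, and $|S_1| = |S_{n-1}| = n = (n-1)+1$, the Combinatorial Nullstellensatz delivers a pair $(a_1,a_{n-1}) \in S_1 \times S_{n-1}$ at which $P$ does not vanish, which completes the proof. The main technical obstacle is the closed-form computation of the leading coefficient: the denominator $(n-1)^{n-1}$ that appears midway in the $\mathbb{Q}$-calculation (from the root $-(n-2)a_{n-1}/(n-1)$ of $g'$) must cancel cleanly against the factor $\mathrm{lc}(g')^{\deg g} = (n-1)^{n-1}$ on the outside, so that the final answer is a universal integer depending only on $n-2$, and the reduction to positive characteristic imposes no condition beyond $n \not\equiv 2 \pmod p$.
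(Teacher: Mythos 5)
Your proposal is correct and follows essentially the same route as the paper: reduce via the Combinatorial Nullstellensatz to showing that $A_1^{n-1}A_{n-1}^{n-1}$ occurs in the universal discriminant with coefficient $\pm(n-2)^{n-2}$, specialize the remaining coefficients to $0$ (and $a_n$ to $1$), and factor out $T$ to land on the trinomial $T^{n-1}+a_{n-1}T^{n-2}+a_1$. The only difference is that you compute the trinomial discriminant directly from the product-over-roots formula for the resultant where the paper cites Lefton's trinomial discriminant formula; your homogeneity bookkeeping and the cancellation of the $(n-1)^{n-1}$ factors both check out.
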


This theorem is of `large finite field limit' type since if we fix $\mathbb{F}$, 
there are only finitely many values of $n$ to which \thmref{SecRes} is applicable, 
while if we fix $n$ and take $|\mathbb{F}|$ to infinity, 
we find squarefrees in very sparse boxes, 
as compared to results on squarefrees in short intervals such as \cite[Theorem 1.3 (ii)]{KR2} by Keating and Rudnick, and \cite[Theorem 2.4]{CE} by Carmon. 
As in \thmref{FirstRes}, the first condition in \eqref{CharNo2Eq} can be weakened in case $p=2$.

Combining our results with those of Erdos from \cite{Erd}, we show that (in the large finite field limit) almost every polynomial in a cube is squarefree.

\begin{cor} \label{cor}

Fix an integer $n > 1$.
For every finite field $\mathbb{F}$ pick subsets $S_0 = S_0(\mathbb{F}), \dots, S_n = S_n(\mathbb{F}) \subseteq \mathbb{F}$ having the same cardinality $C(\mathbb{F})$ such that 
\begin{equation} \label{GrowthEq}
\lim_{|\mathbb{F}| \to \infty} C(\mathbb{F}) = \infty.
\end{equation}
Then
\begin{equation*}
\lim_{|\mathbb{F}| \to \infty} \frac{\# \big\{(a_0, \dots, a_n) \in S_0 \times \dots \times S_n 
: \sum_{i=0}^n a_iT^i \ \text{is squarefree} \big\}}
{\# (S_0 \times \dots \times S_n)} = 1.
\end{equation*}

\end{cor}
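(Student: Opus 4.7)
The plan is to recast non-squarefreeness as the vanishing of a discriminant polynomial and bound its zero set in the box via Schwartz--Zippel, using \thmref{FirstRes} to guarantee that the discriminant does not vanish identically once the leading and constant coefficients have been fixed. Concretely, set $D(a_0, \dots, a_n) \defeq \mathrm{disc}_T\bigl(\sum_{i=0}^n a_i T^i\bigr)$, which is a polynomial of total degree $2n-2$ in the $a_i$, not identically zero over any large enough finite field (since squarefree polynomials of degree $n$ exist), and whose vanishing is equivalent to $m$ being non-squarefree whenever $a_n \neq 0$.

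The at most $C^n$ tuples with $a_n = 0$ contribute a fraction $\leq 1/C$ and can be set aside. For each remaining pair $(a_0, a_n) \in S_0 \times S_n$ with $a_n \neq 0$, I would examine the specialization
\[
D_{a_0, a_n}(a_1, \dots, a_{n-1}) \defeq D(a_0, a_1, \dots, a_{n-1}, a_n),
\]
a polynomial of total degree at most $2n-2$ in $n-1$ variables. Since $C \to \infty$, for all sufficiently large $|\mathbb{F}|$ one has $C \geq 3$, and applying \thmref{FirstRes} to $\{a_0\}, S_1', \dots, S_{n-1}', \{a_n\}$ with any $3$-element subsets $S_i' \subseteq S_i$ produces a tuple making $m$ squarefree; hence $D_{a_0, a_n}$ is not the zero polynomial. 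The Schwartz--Zippel lemma then bounds its zero set in $S_1 \times \dots \times S_{n-1}$ by $(2n-2) C^{n-2}$, and summing over the at most $C^2$ relevant pairs yields at most $(2n-2) C^n$ non-squarefree tuples with $a_n \neq 0$. Combined with the $a_n = 0$ case, the overall non-squarefree fraction is $O_n(1/C) \to 0$, as required.

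The principal obstacle is the non-vanishing step: without \thmref{FirstRes} one could not rule out ``degenerate'' pairs $(a_0, a_n)$ for which every choice of the middle coefficients would make $m$ non-squarefree, and the Schwartz--Zippel count would give no information at all. This is precisely the input \thmref{FirstRes} provides. The quoted results of Erdős from \cite{Erd} presumably enter the final counting step, or furnish complementary control on non-squarefree densities that allows one to push the density statement through for arbitrary (unstructured) subsets $S_i$ of cardinality $C(\mathbb{F})$.
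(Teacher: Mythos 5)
Your argument is correct, but it takes a genuinely different route from the paper's. The paper's proof is purely combinatorial: it encodes the non-squarefree tuples as edges of an $(n+1)$-uniform $(n+1)$-partite hypergraph, observes that \thmref{FirstRes} forbids a complete $K^{(n+1)}(3,\dots,3)$ (any $3\times\dots\times 3$ sub-box contains a nonzero leading coefficient, hence a squarefree polynomial), and then invokes Erd\H{o}s's extremal theorem \cite{Erd} to bound the number of edges by $(2nC)^{n+1-3^{-n}}$, giving a non-squarefree density $O_n\big(C^{-3^{-n}}\big)$. You instead exploit the algebraic structure directly: non-squarefreeness for $a_n\neq 0$ is the vanishing of the degree-$(2n-2)$ discriminant, each specialization $D_{a_0,a_n}$ is a nonzero polynomial in $n-1$ variables (your appeal to \thmref{FirstRes} for this, valid once $C\geq 3$, is legitimate), and Schwartz--Zippel bounds its zero set in $S_1\times\dots\times S_{n-1}$ by $(2n-2)C^{n-2}$; together with the at most $C^n$ tuples with $a_n=0$ this yields density $O_n(1/C)$, quantitatively much stronger than the paper's bound. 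The trade-off is that the paper's argument uses only the qualitative output of \thmref{FirstRes} and would apply verbatim to any property lacking full $3\times\dots\times 3$ sub-boxes, whereas yours leans on non-squarefreeness being cut out by a single low-degree polynomial. Your closing speculation about the role of \cite{Erd} should be dropped: Erd\H{o}s's theorem is the paper's substitute for Schwartz--Zippel and plays no role in your proof. A small simplification is available to you: since the discriminant is homogeneous of degree $2n-2$ and the monomial $A_1^2\cdots A_{n-1}^2$ already has that degree, \propref{FirstProp} alone shows this monomial survives in $D_{a_0,a_n}$ with coefficient $\pm 1$, so you do not need the full strength of \thmref{FirstRes} for the non-vanishing step.
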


Note that the assumption in \eqref{GrowthEq} is necessary, and (perhaps surprisingly) an arbitrarily slow growth rate suffices, as the corollary shows.
For sufficiently high growth rates in \eqref{GrowthEq}, \corref{cor} can be proved using the standard sieving and Fourier transform techniques.
As the example following \cite[Theorem 1.3]{KR2} shows, the analog of \corref{cor} for general boxes (with growing volume) fails, so the restriction in \corref{cor} to cubes is necessary.

\section{The proofs}   

In order to conclude that a degree $n \geq 2$ polynomial
\begin{equation}
m(T) = A_0 + A_1T + \dots + A_{n-1}T^{n-1} + A_nT^n \in \mathbb{F}[T]
\end{equation}
over a field $\mathbb{F}$ is squarefree, it suffices to show that its discriminant, 
which is a homogeneous polynomial in $A_0, \dots, A_n$ of degree $2n-2$, does not vanish. 
For that, we use the following special case of the Combinatorial Nullstellensatz \cite[Theorem 1.2]{Alon}.

\begin{cor} \label{NullCor}

Let $\mathbb{F}$ be a field, let $f(A_0, \dots, A_n) \in \mathbb{F}[A_0, \dots, A_n]$ be a homogeneous polynomial,
and let $A_0^{i_0}A_1^{i_1} \cdots A_n^{i_n}$ be a monomial that appears (with a nonzero coefficient) in $f$.
Suppose that $S_0, S_1, \dots, S_n \subseteq \mathbb{F}$ satisfy
\begin{equation}
|S_0| > i_0, \ |S_1| > i_1, \dots, \ |S_n| > i_n.
\end{equation}
Then there exist $a_0 \in S_0, \ a_1 \in S_1, \dots, \ a_n \in S_n$ such that
\begin{equation}
f(a_0, a_1, \dots, a_n) \neq 0.
\end{equation}

\end{cor}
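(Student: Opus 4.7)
The plan is to deduce the corollary directly from Alon's Combinatorial Nullstellensatz \cite[Theorem 1.2]{Alon}. That theorem makes exactly the same conclusion as \corref{NullCor} under one additional hypothesis: the total degree of the distinguished monomial $A_0^{i_0} A_1^{i_1} \cdots A_n^{i_n}$ must coincide with the total degree of $f$, that is, $i_0 + i_1 + \cdots + i_n = \deg f$. So the only thing to check is that this degree condition is automatic in our setting.

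This is where the homogeneity assumption enters. Since $f$ is homogeneous, every monomial appearing in $f$ with nonzero coefficient has total degree equal to $\deg f$; in particular the monomial $A_0^{i_0} \cdots A_n^{i_n}$ does. Combined with the given cardinality conditions $|S_j| > i_j$, this supplies all the input required by Alon's theorem, and the existence of $a_0 \in S_0, \ldots, a_n \in S_n$ with $f(a_0, \ldots, a_n) \neq 0$ follows at once. There is no genuine obstacle: the corollary is simply the packaging of the Combinatorial Nullstellensatz convenient for later use, where $f$ will be taken to be the discriminant of $m(T)$ (a homogeneous polynomial of degree $2n-2$ in $A_0, \ldots, A_n$), and the verification that certain monomials of the discriminant have nonzero coefficient will be the substantive work carried out in the proofs of \thmref{FirstRes} and \thmref{SecRes}.
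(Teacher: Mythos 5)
Your proposal is correct and matches the paper's (implicit) argument: the paper simply cites Alon's Theorem 1.2, and the only point needing verification is that homogeneity forces the distinguished monomial's total degree to equal $\deg f$, which is exactly the observation you make. Nothing further is required.
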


Hence, in order to obtain the first part of \thmref{FirstRes}, it suffices to establish the following claim.

\begin{prop} \label{FirstProp}

The monomial 
\begin{equation}
A_1^{2}A_2^{2} \cdots A_{n-1}^{2}
\end{equation}
appears (with coefficient $\pm 1$) in the degree $(2n-2)$ homogeneous polynomial
\begin{equation}
f(A_0, A_1, \dots, A_n) \defeq \mathrm{Disc}_T(A_0 + A_1T + \dots + A_{n}T^n).
\end{equation}

\end{prop}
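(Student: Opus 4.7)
The plan is to argue by induction on $n \geq 2$. The starting observation is that the monomial $A_1^2 A_2^2 \cdots A_{n-1}^2$ involves neither $A_0$ nor $A_n$, so to determine its coefficient in $f$ it suffices to read off its coefficient in the specialization $f(A_0, \dots, A_n)|_{A_0 = 0}$.

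For the base case $n = 2$, one computes directly that $\mathrm{Disc}_T(A_0 + A_1 T + A_2 T^2) = A_1^2 - 4 A_0 A_2$, and the coefficient of $A_1^2$ is $+1$.

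For the inductive step, the idea is that setting $A_0 = 0$ factors the polynomial as $T \cdot h(T)$, where $h(T) \defeq A_1 + A_2 T + \cdots + A_n T^{n-1}$. Invoking the multiplicativity of the discriminant, I would write
\begin{equation*}
\mathrm{Disc}_T\bigl(T \cdot h(T)\bigr) = \mathrm{Disc}_T(T) \cdot \mathrm{Disc}_T(h(T)) \cdot \mathrm{Res}_T(T, h(T))^2 = A_1^2 \cdot \mathrm{Disc}_T(h(T)),
\end{equation*}
using that $\mathrm{Disc}_T(T) = 1$ and that $\mathrm{Res}_T(T, h(T)) = h(0) = A_1$. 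Consequently, the coefficient of $A_1^2 A_2^2 \cdots A_{n-1}^2$ in $f|_{A_0 = 0}$ equals the coefficient of $A_2^2 A_3^2 \cdots A_{n-1}^2$ in $\mathrm{Disc}_T(A_1 + A_2 T + \cdots + A_n T^{n-1})$. After the obvious renaming of the coefficient variables, this is exactly the analogous ``consecutive squares'' monomial in the discriminant of a generic polynomial of degree $n-1$, whose coefficient is $\pm 1$ by the inductive hypothesis (and only a single monomial of $\mathrm{Disc}_T(h)$ can contribute to it after renaming, since the substitution is an injection on monomials of the appropriate multi-degree).

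The main technical ingredient, and essentially the only nontrivial step, is the discriminant multiplicativity identity $\mathrm{Disc}_T(T h) = A_1^2 \mathrm{Disc}_T(h)$ as an equality of polynomials in $A_1, \dots, A_n$; this follows from the standard formula $\mathrm{Disc}(fg) = \mathrm{Disc}(f)\mathrm{Disc}(g)\mathrm{Res}(f,g)^2$, valid universally in the coefficient ring. Once this identity is in hand, the induction is transparent, and no explicit expansion of the Sylvester determinant or tracking of signs beyond $\pm 1$ is needed.
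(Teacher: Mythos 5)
Your proof is correct and follows essentially the same route as the paper: the same induction on $n$, the same specialization $A_0 = 0$ and factorization as $T\cdot h(T)$, and the same key identity $\mathrm{Disc}_T(Th) = A_1^2\,\mathrm{Disc}_T(h)$. The only difference is cosmetic: you invoke the multiplicativity formula $\mathrm{Disc}(fg)=\mathrm{Disc}(f)\mathrm{Disc}(g)\mathrm{Res}(f,g)^2$ directly, whereas the paper re-derives this identity (up to sign) through a chain of resultant manipulations starting from $\mathrm{Disc}(p)=\pm\mathrm{Res}(p,p')/A_n$.
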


\begin{proof}

We induct on $n$ noting that the base case $n = 2$ is well known. 
Setting 
\begin{equation} \label{qDefEq}
p(T) \defeq A_1T + A_2T^2 + \dots + A_{n}T^n, \quad q(T) \defeq \frac{p(T)}{T},
\end{equation}
we observe that it is sufficient to show that $\pm A_1^{2}A_2^{2} \cdots A_{n-1}^{2}$ appears in
\begin{equation}
f\big(0,A_1, \dots, A_n\big) = \mathrm{Disc}_T\big(p(T)\big).
\end{equation}
The basic relation between the resultant and the discriminant tells us that
\begin{equation} \label{BasEq}
\begin{split}
\mathrm{Disc}_T\big(p(T)\big) &= \frac{(-1)^{\frac{n(n-1)}{2}}\mathrm{Res}_T\big(p(T),p'(T)\big)}{A_n}.
\end{split}
\end{equation}
We can ignore the sign, use the notation of \eqref{qDefEq}, and differentiate with respect to $T$ to obtain
\begin{equation} \label{DerEq}
\frac{\mathrm{Res}_T\big(p(T),p'(T)\big)}{A_n} = \frac{\mathrm{Res}_T\big(Tq(T),q(T) + Tq'(T)\big)}{A_n}.
\end{equation}
Using the multiplicativity of the discriminant, we see that the right-hand side of \eqref{DerEq} equals
\begin{equation} \label{DiffEq}
\frac{\mathrm{Res}_T\big(T,q(T) + Tq'(T)\big) \cdot \mathrm{Res}_T\big(q(T),q(T) + Tq'(T)\big)}{A_n}.
\end{equation}
Using the invariance rule of the resultant under linear transformations, we conclude that up to a sign,
\eqref{DiffEq} equals
\begin{equation} \label{TransEq}
\frac{\mathrm{Res}_T\big(T,q(T) \big) \cdot \mathrm{Res}_T\big(q(T), Tq'(T)\big)}{A_n}.
\end{equation} 
Another application of multiplicativity shows that (up to a sign) \eqref{TransEq} is
\begin{equation} \label{AnEq}
\frac{\mathrm{Res}_T\big(T,q(T) \big)^2 \cdot \mathrm{Res}_T\big(q(T), q'(T)\big)}{A_n}.
\end{equation}
Applying \eqref{BasEq} to $q(T)$ we find that (up to a sign) \eqref{AnEq} equals
\begin{equation} \label{FinEq}
\mathrm{Res}_T\big(T,q(T) \big)^2 \cdot \mathrm{Disc}_T\big(q(T)\big).
\end{equation}
Since $q(T)$ is congruent to $A_1$ mod $T$, we have
\begin{equation} \label{QuadFacEq}
\mathrm{Res}_T\big(T,q(T) \big)^2 = \mathrm{Res}_T\big(T, A_1 \big)^2 = A_1^2,
\end{equation} 
and from induction, we get that $\pm A_2^2 \cdots A_{n-1}^2$ appears in $\mathrm{Disc}_T\big(q(T)\big)$.
Hence, $\pm A_1^2 \cdot A_2^2 \cdots A_{n-1}^2$ appears in \eqref{FinEq} as required.
\end{proof}

If the characteristic of $\mathbb{F}$ is 2, the polynomial
\begin{equation} \label{DefDiscEq}
f(A_0, A_1, \dots, A_n) \defeq \mathrm{Disc}_T(A_0 + A_1T + \dots + A_{n}T^n)
\end{equation}
is easily seen to be a square of some $g(A_0, A_1, \dots, A_n) \in \mathbb{F}[A_0, \dots, A_n]$,
so we can apply \corref{NullCor} to $g$ instead.
Since $A_1^2 \cdots A_{n-1}^2$ appears in $f$ (with coefficient 1), 
it readily follows that $A_1 \cdots A_{n-1}$ appears in $g$, and the second part of \thmref{FirstRes} is thus established.

Similarly, in order to prove \thmref{SecRes}, it suffices to know the following.

\begin{prop}
The monomial $A_1^{n-1}A_{n-1}^{n-1}$ appears with coefficient 
\begin{equation}
\pm (n-2)^{n-2}
\end{equation}
in the polynomial $f$ from \eqref{DefDiscEq}.

\end{prop}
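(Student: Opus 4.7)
The plan is to reduce the question to a discriminant computation for a trinomial. Since the monomial $A_1^{n-1}A_{n-1}^{n-1}$ involves only the two variables $A_1$ and $A_{n-1}$, its coefficient in $f$ is unchanged by the specialization $A_0 = A_2 = A_3 = \cdots = A_{n-2} = 0$. Thus it suffices to compute the coefficient of $A_1^{n-1}A_{n-1}^{n-1}$ in $\mathrm{Disc}_T(A_1T + A_{n-1}T^{n-1} + A_nT^n)$.

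Factoring out $T$ and writing $A_1T + A_{n-1}T^{n-1} + A_nT^n = T \cdot r(T)$ with $r(T) = A_1 + A_{n-1}T^{n-2} + A_nT^{n-1}$, the same sequence of resultant manipulations as in the proof of \propref{FirstProp} (culminating in an analog of \eqref{FinEq} and \eqref{QuadFacEq}) yields
$$\mathrm{Disc}_T\bigl(T\cdot r(T)\bigr) = \pm A_1^2 \cdot \mathrm{Disc}_T(r(T)).$$
The task thus reduces to finding the coefficient of $A_1^{n-3}A_{n-1}^{n-1}$ in the discriminant of the \emph{trinomial} $r(T)$.

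The key computation proceeds as follows. Since
$$r'(T) = T^{n-3}\bigl[(n-1)A_nT + (n-2)A_{n-1}\bigr],$$
the roots of $r'$ are $T = 0$ with multiplicity $n-3$ together with the simple root $\gamma_0 = -(n-2)A_{n-1}/\bigl((n-1)A_n\bigr)$. The standard resultant formula
$$\mathrm{Res}_T(r, r') = \pm \bigl[(n-1)A_n\bigr]^{n-1} \cdot r(0)^{n-3} \cdot r(\gamma_0),$$
combined with $r(0) = A_1$ and the simplification
$$r(\gamma_0) = A_1 + \frac{(-1)^{n-2}(n-2)^{n-2}A_{n-1}^{n-1}}{(n-1)^{n-1}A_n^{n-2}}$$
(which arises from the cancellation $1 - (n-2)/(n-1) = 1/(n-1)$), allows one to evaluate $\mathrm{Disc}_T(r) = \pm \mathrm{Res}_T(r, r')/A_n$ explicitly as a sum of two monomials. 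The coefficient of $A_1^{n-3}A_{n-1}^{n-1}$ reads off as $\pm (n-2)^{n-2}$, yielding the claim after multiplying by the prefactor $\pm A_1^2$.

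The main delicacy is that intermediate expressions contain factors of $(n-1)$ in the denominator, so the argument is most cleanly phrased as a polynomial identity in $\mathbb{Q}(A_1, A_{n-1}, A_n)$ which, by the integrality of the discriminant, descends to $\mathbb{Z}[A_1, A_{n-1}, A_n]$ and then reduces to any characteristic $p$, including the case $p \mid n-1$. The hypothesis $n \not\equiv 2 \pmod{p}$ in \thmref{SecRes} is then exactly what ensures that the integer $(n-2)^{n-2}$ remains nonzero in $\mathbb{F}$, as needed to invoke \corref{NullCor}.
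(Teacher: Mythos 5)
Your argument is correct and follows the same reduction as the paper: specialize $A_0 = A_2 = \cdots = A_{n-2} = 0$, factor out $T$, and use multiplicativity of the discriminant together with $\mathrm{Res}_T(T, r)^2 = A_1^2$ to reduce to the discriminant of the trinomial $r(T) = A_1 + A_{n-1}T^{n-2} + A_nT^{n-1}$. The one place you genuinely diverge is the last step: the paper simply cites Lefton's trinomial discriminant formula \cite{Left} to evaluate \eqref{TrinEq}, whereas you rederive it from the product formula $\mathrm{Res}_T(r,r') = \pm\bigl[(n-1)A_n\bigr]^{n-1} r(0)^{n-3} r(\gamma_0)$ over the roots of $r'$; your computation of $r(\gamma_0)$ is right and reproduces exactly the expression $A_1^{n-1}\bigl((n-1)^{n-1}A_1A_n^{n-2} \pm (n-2)^{n-2}A_{n-1}^{n-1}\bigr)$ that the paper quotes. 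This buys self-containedness, at the cost of having to address the characteristic-$p$ issue explicitly: when $p \mid n-1$ the derivative $r'$ drops degree and the root-product formula does not apply verbatim. Your fix --- treating the discriminant as a universal polynomial in $\mathbb{Z}[A_1, A_{n-1}, A_n]$, computing in characteristic $0$, and reducing mod $p$ --- is the standard and correct way to handle this, and is a subtlety the paper leaves implicit. Your closing remark correctly identifies that the hypothesis $n \not\equiv 2 \pmod p$ is what keeps $(n-2)^{n-2}$ nonzero so that \corref{NullCor} applies.
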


\begin{proof}

As in the proof of \propref{FirstProp}, we just need to find the coefficient of our monomial in
\begin{equation} \label{SparseDiscEq}
f\big(0, A_1, 0, \dots, 0, A_{n-1}, A_n\big) = \mathrm{Disc}_T(A_1T + A_{n-1}T^{n-1} + A_nT^n).
\end{equation}
By the multiplicativity of the discriminant, we see that the right-hand side of \eqref{SparseDiscEq} equals (up to a sign)
\begin{equation}
\begin{split}
&\mathrm{Disc}_T(T) \cdot \mathrm{Disc}_T(A_1 + A_{n-1}T^{n-2} + A_nT^{n-1}) \cdot \\
&\mathrm{Res}_T(T, A_1 + A_{n-1}T^{n-2} + A_nT^{n-1})^2
\end{split}
\end{equation}
and this reduces (using the properties of resultants mentioned above) to
\begin{equation} \label{TrinEq}
A_1^2\mathrm{Disc}_T(A_1 + A_{n-1}T^{n-2} + A_nT^{n-1}).
\end{equation}
It is shown in \cite{Left} that \eqref{TrinEq} evaluates (up to a sign) to
\begin{equation}
A_1^{n-1}\big((n-1)^{n-1}A_1A_n^{n-2} \pm (n-2)^{n-2}A_{n-1}^{n-1} \big)
\end{equation}
so we have found the required coefficient.
\end{proof}

Let us now deduce \corref{cor} from \thmref{FirstRes}.

\begin{proof}

Define an $(n+1)$-uniform $(n+1)$-partite hypergraph $H$ whose set of vertices is the disjoint union of 
$S_0, \dots, S_n$ and for any $a_0 \in S_0, \dots, a_n \in S_n$ the set $\{a_0, \dots, a_n\}$ is an edge if 
\begin{equation}
m(T) = a_0 + a_1T + \dots + a_{n-1}T^{n-1} + a_nT^n
\end{equation}
is not squarefree. 
\thmref{FirstRes} tells us that $H$ does not contain the complete $(n+1)$-uniform $(n+1)$-partite hypergraph 
$K^{(n+1)}(3, \dots, 3)$, 
so by \cite[Theorem 1]{Erd}, the number of edges in $H$ is at most
\begin{equation}
\big(2nC(\mathbb{F})\big)^{n+1-\frac{1}{3^n}}
\end{equation} 
when $|\mathbb{F}|$ is large enough (compared to $n$).
Hence, the density of the polynomials in $S_0 \times \dots \times S_n$ that are not squarefree is at most
\begin{equation} \label{DensityEq}
\frac{\big(2nC(\mathbb{F})\big)^{n+1-\frac{1}{3^n}}}{C(\mathbb{F})^{n+1}} \leq
\frac{(2n)^{2n}}{C(\mathbb{F})^{\frac{1}{3^n}}}.
\end{equation}
As $|\mathbb{F}| \to \infty$, $C(\mathbb{F})$ grows to $\infty$, so the density in \eqref{DensityEq} goes to $0$. 
\end{proof}

\section*{Acknowledgments}
We sincerely thank Dan Carmon, Gal Dor, and Noga Alon for helpful discussions.
Mark Shusterman is grateful to the Azrieli Foundation for the award of an Azrieli Fellowship.
The second author was partially supported by a grant of the Israel Science Foundation with cooperation of UGC no. 40/14.

\end{document}